\newtheorem{theorem}{Theorem}[section]
\newtheorem{Lemma}[theorem]{Lemma}
\newtheorem{remark}[theorem]{Remark}
\theoremstyle{remark}
\newcommand{\Aut}{\operatorname{Aut}}
\begin{document}

\title{Examples of  surfaces with canonical maps of degree $12$, $13$, $15$, $16$ and $18$}
\author{Federico Fallucca}
%\address{Federico Fallucca and Christian Gleissner  \newline University of Bayreuth, Universit\"atsstr. 30, D-95447 Bayreuth, Germany }
%\email{christian.gleissner@uni-bayreuth.de,Â  federico.fallucca@uni-bayreuth.de}

\thanks{
\textit{2020 Mathematics Subject Classification.} Primary: 14J29, Secondary: 14J10\\
\textit{Keywords}:  Product-quotient surface; Surface of general type; Canonical map \\
\textit{Acknowledgements:} The author would like to thank  Fabrizio Catanese, Davide Frapporti, Bin Nguyen and Roberto Pignatelli
 for useful comments and discussions.}
\address{Federico Fallucca 
\newline Università di Trento, Via Sommarive 14, I-38123 Trento (TN), Italia}
\email{federico.fallucca@unitn.it}

\begin{abstract}
In this note we present examples of complex algebraic surfaces with canonical maps of degree $12$, $13$, $15$, $16$  and $18$. They are constructed as quotients 
of a product of two curves of genus $10$ and $19$ using certain non-free actions of the group $S_3\times \mathbb Z_3^2$. To our knowledge there are no other examples in literature of surfaces with canonical map of degree $13$, $15$ and $18$.
\end{abstract}

\maketitle

\section{Introduction}

\noindent
Beauville has shown in \cite{B79} that if the image of the canonical map $\Phi_{K_S}$ of a surface has dimension $2$, then its degree $d$ is bounded as follows:  
\[
d:=\deg(\Phi_{K_S}) \leq 9+\frac{27-9q}{p_g-2}\leq 36.
\]
Note that the bound $d\leq 36$ was shown first by Persson in \cite[Proposition $5.7$]{Per}.
Here,  $q$ is the irregularity and $p_g$ the geometric genus of $S$. 
In particular, $28 \leq d$ is only possible if $q=0$ and $p_g=3$.
%Beauville has shown in \cite{B79} that the degree $d$ of the canonical map of a surface $S$ of general type is at most  $36$. The maximum is reached if and only if $S$ is regular,  $p_g(S)=3$, $K_S^2=36$ and
 %the canonical system $\vert K_S\vert$ is base-point free. 
Motivated  by this observation, 
the construction  of surfaces with $p_g=3$ and canonical map of degree $d$ for every  value $2 \leq d \leq  36$ is an 
interesting, but still widely open  problem \cite[Question 5.2]{MLP21}. 
%In particular for most values   $10 \leq d$, it is not known if a surface realising that degree  exists. 
For a long time the only examples with $10\leq d$ were the surfaces of 
Persson \cite{Per},  with canonical map of degree $16$, and Tan \cite{Tan}, with degree $12$.  
In recent years, this
problem attracted the attention of  many authors,  putting an increased effort in the construction of new examples. 
 As a result,  we have now examples in literature for all degrees $2\leq d \leq  12$ and $d=14,16, 20, 24, 27, 32$ and  $36$, see \cite{MLP21},\cite{Ri15, Ri17, Ri17Zwei, Ri22}, \cite{LY21}, \cite{GPR}, \cite{Bin19, Bin21}, \cite{FG2022} and \cite{Bin22}.
  
 In this paper we construct surfaces as quotients of a product of two curves $C_1\times C_2$ modulo an action of the group $S_3\times \mathbb{Z}_3^2$. Here $C_1$ is a fixed curve of genus $10$ while $C_2$ is a curve of genus $19$ varying in a one-dimensional family. Varying the action of $S_3\times \mathbb{Z}_3^2$ we get four different one-dimensional families of canonical models of surfaces of general type with $K_S^2=24$, $p_g=3$ and $q=0$.

 %Our surfaces are  regular with geometric genus $p_g=3$. 
We write the canonical system of each of them in terms of  invariant holomorphic two-forms on the product $C_1\times C_2$. 
It turns out that for none of them  $\vert K_{S}\vert$ is base-point free, i.e. the canonical map 
$\Phi_{K_{S}} \colon S \dashrightarrow \mathbb P^2$ is just a rational map. To compute its degree, we resolve
 the indeterminacy by a sequence of blowups and compute  the degree of the resulting morphism  via elementary intersection theory.  It turns out that the degree of the canonical map is not always constant in a family and in fact it assumes five different values: $d=12,13,15,16$ and $18$. To our knowledge there are no other examples in literature of surfaces with canonical map of degree $13$, $15$ and $18$. \footnote{During the preparation of this work Bin Nguyen has communicated to us a different construction of a surface with canonical map of degree $13$.}

We point out that our surfaces are examples of product-quotient surfaces, i.e. quotients of product of two curves modulo an action of a finite group. In our cases the action is diagonal and non-free, arising surfaces with $8$ rational double points as singularities of type $\frac{1}{2}(1,1)$.
Product-quotient surfaces are studied for the first time by Catanese in \cite{Cat00}. They are revealed to being a very useful tool for building new examples of algebraic surfaces and studying  their geometry  in an accessible way. Apart from other works, that mainly deal with irregular surfaces, we want to mention the 
complete classification of surfaces isogenous to a product with $p_g=q=0$ \cite{BCG} and the classification  for $p_g=1$ and  $q=0$ under the assumption that the action is diagonal  \cite{G15}, 
the rigid but not infinitesimally rigid manifolds \cite{BP21} of Bauer and Pignatelli that gave a  negative answer to  a question of Kodaira and Morrow \cite[p.45]{KM71} and 
also the infinite series
 of $n$-dimensional infinitesimally rigid manifolds of general type with non-contractible universal cover for each $n\geq 3$,  provided  by 
 Frapporti and Gleissner\cite{FG}. %For $n=2$, the existence of such manifolds  is still an  open question, see  \cite{BC}.  

%, and obtained as quotients by an  abelian group. 

%Under these assumptions, a classification can be carried out: there are precisely seven isomorphism classes. All of the seven surfaces can be realised as a product of two Fermat septics modulo a suitable action of $\mathbb{Z}_7^2$. 
%by using a modified version of the MAGMA algorithm \cite{GPR} we could 
\medskip
\noindent
{\bf Notation:} 
%Throughout the paper a surface $S$ is a  projective manifold of dimension two. We use standard terminology in surface theory, cf. \cite{B79}. Let us fix the following notation
%\\
%$\sigma$ and $\tau$ is a rotation and a reflection of $S_3$;
%\\
%$1$, $sgn$ and $\mu:=\frac{1}{2}\left(\chi_{reg}-sgn-1\right)$ are the three irreducible characters of $S_3$, where $sgn$ is the sign permutation character and $\chi_{reg}$ is the character of the regular representation of $S_3$;
%\\
%$e_1, e_2$ is a  basis of $\mathbb Z_3^2$ and $\epsilon_1$, $\epsilon_2$ are  the dual characters of  $e_1$ and $e_2$, i.e. the characters defined by
%\[
%\epsilon_i(ae_1+be_2):=\zeta_3^{a\delta_{1i}+b\delta_{2i}}, \qquad \zeta_3:=e^{\frac{2\pi i }{3}},
%\]
%where $\delta_{ij}$ is the Kronecker delta;
%\\
%given a representation $\rho$ on a vector space $V$ and an isotypic component $W$ of $V$ of character $\chi$, we can sometimes write $W_\chi$ instead of  $W$ for specifying its character;
%%, i.e. an invariant subspace $W$ whose character of the representation $\rho$ restricted on $W$ is the character $\chi$;
%\\
%$\sqrt[n]{\lambda}$ is  one of the $n$-roots (arbitrarily chosen) of the complex number $\lambda$;
%\\
%$[j]\in \{0,1\}$ is  the class of the integer number $j$ modulo $2$.
%
%\textcolor{blue}{scegliere uno dei due stili}
An algebraic surface $S$ is a \textit{canonical model} if it has at most rational double points as singularities and ample canonical divisor. Recall that each surfaces of general type is birational to a unique canonical model. In particular the minimal resolution of the singularities of $S$ is its minimal model. 

Let us denote by $\sigma$ and $\tau$ a rotation ($3$-cycle) and a reflection (transposition) of $S_3$ respectively. Consider also the three irreducible characters of $S_3$, so the trivial character $1$, the character $\textit{sgn}$ computing the sign of a permutation, and the only $2$-dimensional irreducible character $\mu:=\frac{1}{2}\left(\chi_{reg}-sgn-1\right)$, where $\chi_{reg}$ is the character of the regular representation of $S_3$.
\\
Let us fix a basis $e_1, e_2$ of $\mathbb Z_3^2$  and consider the dual characters $\epsilon_1$, $\epsilon_2$ of $e_1$ and $e_2$, i.e. the characters defined by
\[
\epsilon_i(ae_1+be_2):=\zeta_3^{a\delta_{1i}+b\delta_{2i}}, \qquad \zeta_3:=e^{\frac{2\pi i }{3}},
\]
where $\delta_{ij}$ is the Kronecker delta. 
\\
Given a representation $\rho$ on a vector space $V$ and an isotypic component $W$ of $V$ of character $\chi$, we can sometimes write $W_\chi$ instead of  $W$ for specifying its character;
\\
When we write $\sqrt[n]{\lambda}$ we mean one of the $n$-roots (arbitrarily chosen) of the complex number $\lambda$. 
\\
Finally, denote by $[j]\in \{0,1\}$ the class of the integer number $j$ modulo $2$.

\section{The surfaces}

\noindent
In this section we construct a series of surfaces $S$,  
 as quotients of a product of the two curves $C_1$ and $C_2$, 
modulo a suitable diagonal  action of the group $S_3\times \mathbb Z_3^2$. For any surface $S$, 
we determine the canonical map $\Phi_{K_S}$ and compute its  degree. 

We consider the projective space $\mathbb{P}^3$ with homogeneous coordinates $x_0, \ldots, x_3$ and the weighted projective space $\mathbb{P}^3(1,1,1,2)$ with homogeneous coordinates $y_0, \ldots, y_3$. Here $y_3$ is the variable of weight $2$.
We take the curves $C_1\subseteq \mathbb{P}^3$ and $C_2\subseteq \mathbb{P}^3(1,1,1,2)$ as follows 
\[
C_1 \colon \begin{cases}
	x_2^3=x_0^3-x_1^3 \\
	x_3^3=x_0^3+x_1^3
\end{cases}, \qquad   C_2 \colon \begin{cases}
	y_2^3=y_0^3+y_1^3 \\
	y_3^3=y_0^6+y_1^6-2\lambda y_0^3y_1^3
\end{cases}, \lambda\neq -1,1
\]
%modulo certain free and diagonal  actions of the group $  S_3\times\mathbb Z_3^2$.
 
\noindent 
Both curves are smooth,  in fact this is the reason why we assume $\lambda \neq -1,1$ in the definition of $C_2$.

On the first curve $C_1$ we consider the action of $S_3\times \mathbb Z_3^2$ given by 
\[
\phi_1\colon  S_3\times \mathbb Z_3^2 \to \Aut(C_1), \quad \left(\sigma^i\tau^j, (a,b)\right) \mapsto [(x_0:x_1:x_2: x_3) \mapsto (\zeta_3^ix_{[j]}:x_{[j+1]} : (-1)^j\zeta_3^{2a+2i} x_2:  \zeta_3^{2b+2i} x_3)]. 
\]
We leave to the reader to checking that this defines an action. 

Note that the automorphisms $\phi_1(\sigma^i\tau^j,(a,b))$ are precisely the   deck transformations of the  cover
\[
\pi_1 \colon C_1 \stackrel{9 : 1}{\longrightarrow} \mathbb P^1 \stackrel{6 : 1}{\longrightarrow} \mathbb P^1, \qquad (x_0:x_1:x_2:x_3) \mapsto (x_0:x_1)\mapsto \left(x_0^3x_1^3: (x_0^6+x_1^6)/2\right).
\]
In particular 
$C_1/\left(S_3\times \mathbb Z_3^2\right) \simeq \mathbb P^1$ and $\pi_1$ is the quotient map. 
The cover   is branched along $p_1:=(1:1)$, $p_2:=(0:1)$ and $p_3:=(-1:1)$, corresponding to the three orbits of the points  with non trivial stabilizer, of respective length $9,18$ and $9$. A representative of each orbit and a generator of the stabilizer is given by: 
\[
\begin{tabular}{c | c | c | c }
 & $p_1$ & $p_2$ & $p_3$   \\
 \hline
\makebox{representative} & $(1:1:0:\sqrt[3]{2})$ & $(1:0:1:1)$  & $ (1: -\zeta_3 :\sqrt[3]{2}:0)$   \\
\hline
\makebox{generator} & $ g_1:=\left(\tau,(1,0)\right) $ & $g_2:=(\sigma^2,(2,2)) $  & $g_3:=(\sigma\tau,(0,1)) $  \\
\end{tabular}
\]

\noindent 
On the second curve $C_2$ the action $\phi_2$ is defined as

%the group acts by $\phi_2\circ \Psi$, where $\Psi\in \Aut(S_3\times \mathbb Z_3^2)$ is an automorphism depending on the specific example and $\phi_2$ is an action defined as
\[
\phi_2 \colon S_3\times \mathbb Z_3^2 \to \Aut(C_2), \quad \left(\sigma^i\tau^j, (a,b)\right) \mapsto [(y_0:y_1:y_2: y_3) \mapsto (\zeta_3^iy_{[j]}:y_{[j+1]} : \zeta_3^{a+2b+2i} y_2:  \zeta_3^{2a+2b+i} y_3)]. 
\]
As in the previous case, we leave to the reader to checking that this defines a group action and note that the automorphisms $\phi_2(\sigma^i\tau^j,(a,b))$ are precisely the deck transformations of the  cover
\[
\pi_2\colon C_2 \stackrel{9:1}{\longrightarrow}\mathbb P^1 \stackrel{6:1}{\longrightarrow} \mathbb P^1, \qquad (y_0:y_1:y_2:y_3) \mapsto (y_0:y_1)\mapsto \left(y_0^3y_1^3: (y_0^6+y_1^6)/2\right).
\]
Hence
$C_2/\left(S_3\times \mathbb Z_3^2\right) \simeq \mathbb P^1$ and $\pi_2$ is the quotient map. The cover is branched along $q_1:=(1:1)$, $q_2:=(0:1)$, $q_3:=(1:\lambda)$ and $q_4:=(-1:1)$, corresponding to the four orbits of the points  with non trivial stabilizer, of respective length $27,18, 18$ and $9$. Note that the points $q_j$ are pairwise distinct under the assumption $\lambda \neq -1,1$.

%The explicit choices for $\Psi$ are stated in the table below.
%The action $\phi_2\circ \Psi$ has   $72$ points  with non trivial stabilizer. They form one orbit of length $27$, two orbits of length $18$ and one orbit of length $9$. A representative of each orbit and a generator of the stabilizer is given by: 
%\[
%\small\begin{tabular}{c | c | c | c |c}
%	\makebox{point} & $(1:\zeta_3:\sqrt[3]{2}:\sqrt[3]{2-2\lambda})$ & $(0:1:1:1)$  & $ (1: \sqrt[3]{-\lambda+\sqrt{\lambda^2-1}}:\sqrt[3]{1-\lambda+\sqrt{\lambda^2-1}}:0)$ & $(1:-1:0:\sqrt[3]{2+2\lambda})$   \\
%	\hline
%	\makebox{generator} & $ \Psi^{-1}\left(\sigma\tau,0\right) $ & $\Psi^{-1}(\sigma,(1,0)) $  & $\Psi^{-1}(Id,(1,1)) $ & $\Psi^{-1}(\tau,(1,2))$ \\
%\end{tabular}
%\]

A representative of each orbit and a generator of the stabilizer is given by: 
\[
\small\begin{tabular}{c | c | c | c |c}
	 & $q_1$ & $q_2$  & $ q_3$ & $q_4$   \\
	\hline
	\makebox{representative} & $(1:\zeta_3:\sqrt[3]{2}:\sqrt[3]{2-2\lambda})$ & $(0:1:1:1)$  & $ (1: \sqrt[3]{\lambda-\sqrt{\lambda^2-1}}:\sqrt[3]{1+\lambda-\sqrt{\lambda^2-1}}:0)$ & $(1:-1:0:\sqrt[3]{2+2\lambda})$   \\
	\hline
	\makebox{generator} & $ h_1:=\left(\sigma\tau,0\right) $ & $h_2:=(\sigma,(1,0)) $  & $h_3:=(Id,(1,1)) $ & $h_4:=(\tau,(1,2))$ \\
\end{tabular}
\]
%Please note that the assumption $\lambda\neq -1,1$ is necessary here since otherwise the branched four points are not pairwise distinct.
We compute the action of $S_3\times \mathbb{Z}_3^2$ on $H^0(C_i,\Omega_{C_i}^1)$.

By standard adjunction theory $H^0(C_1,\Omega_{C_1}^1)$ is isomorphic to $H^0(C_1,\mathcal{O}_{C_1}(2))$, isomorphism mapping a monomial $x_0^{2-\alpha-\beta-\gamma}x_1^\alpha x_2^\beta x_3^\gamma$ to the $1$-form $\omega_{\alpha\beta\gamma}$ that in affine coordinates is 
\[
\omega_{\alpha\beta\gamma}:=u^\alpha v^{\beta-2} t^{\gamma-2}du, \qquad \makebox{where} \qquad u:=\frac{x_1}{x_0} \quad   v:=\frac{x_2}{x_0} \quad \makebox{and}  \qquad t:=\frac{x_3}{x_0}.
\]
The character of the \textit{canonical} representation of $C_1$, the action of  $S_3\times \mathbb Z_3^2$ on $H^0(C_1,\Omega_{C_1}^1)$, can be computed by the standard Chevalley-Weil formula and is amount to
\[
\chi_{can}^1=\epsilon_1^2\cdot\epsilon_2^2+sgn\cdot\epsilon_1\cdot\epsilon_2+sgn\cdot\epsilon_2+sgn\cdot\epsilon_1+\mu\cdot\epsilon_1\cdot\epsilon_2+\mu\cdot\epsilon_1^2\cdot\epsilon_2+\mu\cdot \epsilon_1\cdot\epsilon_2^2.\]
We give an explicit decomposition in irreducible subspaces. Using the expression in affine coordinates we obtain 
\[
\begin{split}
	(\sigma^i\tau^j,(a,b))\cdot \omega_{\alpha\beta\gamma} & = \phi_1(\left(\sigma^i\tau^j,(a,b)\right)^{-1})^{\ast}(\omega_{\alpha\beta\gamma}) \\ & =(-1)^{j(\beta-1)}\zeta_3^{a(\beta-2)+b(\gamma-2)+\left(\alpha-(2\alpha+\beta+\gamma-2)[j]+2\beta+2\gamma-7\right)i}\omega_{(\alpha-(2\alpha+\beta+\gamma-2)[j])\beta\gamma}. 
\end{split}
\]
%\[
%(\sigma^i\tau^j,(a,b))\cdot \omega_{\alpha\beta\gamma}= \phi_1(\sigma^i\tau^j,(a,b))^{\ast}(\omega_{\alpha\beta\gamma})=(-1)^{j(\beta-1)}\zeta_3^{a(\beta-2)+b(\gamma-2)+\left(\alpha-(2\alpha+\beta+\gamma-2)[j]+2\beta+2\gamma-7\right)i}\omega_{(\alpha-(2\alpha+\beta+\gamma-2)[j])\beta\gamma}. 
%\]
A tedious but straightforward computation gives the following decomposition: 
\[
\begin{split}
	H^0(C_1,\Omega_{C_1}^1)= & \langle \omega_{011}\rangle_{\epsilon_1^2\cdot\epsilon_2^2} \oplus \langle \omega_{100}\rangle_{sgn\cdot \epsilon_1\cdot \epsilon_2} \oplus \langle \omega_{020}\rangle_{sgn\cdot \epsilon_2} \oplus \langle \omega_{002}\rangle_{sgn\cdot \epsilon_1} \oplus \\
	& \langle \omega_{000},\omega_{200}\rangle_{\mu\cdot \epsilon_1\cdot \epsilon_2}\oplus \langle \omega_{010},\omega_{110}\rangle_{\mu\cdot\epsilon_1^2\cdot\epsilon_2}\oplus \langle \omega_{001},\omega_{101}\rangle_{\mu\cdot \epsilon_1\cdot \epsilon_2^2}.
\end{split}
\]
Similarly, adjunction theory gives an isomorphism among $H^0(C_2,\Omega_{C_2}^1)$ and $H^0(C_2,\mathcal{O}_{C_2}(4))$ mapping a monomial $y_0^{4-\alpha-\beta-2\gamma}y_1^\alpha y_2^\beta y_3^\gamma$ to the $1$-form $\omega'_{\alpha\beta\gamma}$ that in affine coordinates is 
\[
\omega'_{\alpha\beta\gamma}:=(u')^{\alpha}(v')^{\beta-2} (t')^{\gamma-2}du', \qquad \makebox{where} \qquad u':=\frac{y_1}{y_0} \quad   v':=\frac{y_2}{y_0} \quad \makebox{and}  \qquad t':=\frac{y_3}{y^2_0}.
\]
We obtain a basis of $19$ dimension space $H^0(C_2, \mathcal{O}_{C_2}(4))$ by taking the $22$ monomials of degree $4$ in the variables $y_j$ and removing $y_0y_2^3$, $y_1y_2^3$ and $y_2^4$, that can be expressed in terms of the other monomials using the cubic equation defining $C_2$. 
Accordingly we get a basis of $H^0(C_2,\Omega_{C_2}^1)$ by removing from that set $\omega'_{\alpha\beta\gamma}$ the $1$-forms
$\omega'_{040}, \omega'_{030}$ and $\omega'_{130}$.
The \textit{canonical} character of $C_2$ is given by Chevalley-Weil as 
\[
\chi_{can}^2=sgn \cdot \epsilon_1^2\cdot \epsilon_2+sgn\cdot\epsilon_1^2\cdot \epsilon_2^2+sgn\cdot \epsilon_1\cdot \epsilon_2+sgn\cdot \epsilon_1+sgn\cdot \epsilon_2^2+\mu\cdot \epsilon_1+\mu\cdot\epsilon_2+2\mu\cdot \epsilon_2^2+sgn\cdot\epsilon_1^2+\epsilon_1^2+\mu\cdot \epsilon_1^2+\mu\cdot\epsilon_1\cdot\epsilon_2,
\]
and the action on $H^0(C_2,\Omega_{C_2}^1)$ computed in affine coordinates as above is 
\[
\begin{split}
	(\sigma^i\tau^j,(a,b))\cdot \omega'_{\alpha\beta\gamma} & =  \phi_2(\left(\sigma^i\tau^j,(a,b)\right)^{-1})^{\ast}(\omega'_{\alpha\beta\gamma}) \\ &
	=(-1)^{j}\zeta_3^{a(2\beta+\gamma)+b(\beta+\gamma-4)+\left(\alpha-(2\alpha+\beta+2\gamma-4)[j]+2\beta+\gamma+1\right)i}\omega'_{(\alpha-(2\alpha+\beta+2\gamma-4)[j])\beta\gamma}.
\end{split}
\]
Another tedious computation gives the decomposition 
\[
  \begin{split}
  	H^0(C_2,\Omega_{C_2}^1)= & \langle \omega'_{002}\rangle_{sgn\cdot \epsilon_1^2\cdot \epsilon_2} \oplus \langle \omega'_{021}\rangle_{sgn\cdot \epsilon_1^2\cdot \epsilon_2^2}\oplus \langle \omega'_{120}\rangle_{sgn\cdot\epsilon_1\cdot\epsilon_2}  \\
  	& \oplus\langle \omega'_{101}\rangle_{sgn\cdot\epsilon_1}\oplus \langle \omega'_{200}\rangle_{sgn\cdot \epsilon_2^2}\oplus \langle \omega'_{001},\omega'_{201}\rangle_{\mu\cdot \epsilon_1} \oplus \langle\omega'_{011},\omega'_{111}\rangle_{\mu\cdot \epsilon_2} \\ &
  	\oplus \left(\langle\omega'_{000},\omega'_{400}\rangle\oplus \langle \omega'_{100},\omega'_{300}\rangle\right)_{\mu\cdot \epsilon_2^2}\oplus \langle\omega'_{010}+\omega'_{310}\rangle_{sgn\cdot \epsilon_1^2}\oplus\langle\omega'_{010}-\omega'_{310}\rangle_{\epsilon_1^2}
  	%\langle \omega_{100},\omega_{300}\rangle_{(\mu\epsilon_2^2)\circ \Psi} 
  	\\&
  	\oplus \langle \omega'_{110},\omega'_{210}\rangle_{\mu\cdot \epsilon_1^2} \oplus \langle \omega'_{220},\omega'_{020}\rangle_{\mu\cdot \epsilon_1\cdot \epsilon_2}.
  \end{split}
\]
\bigskip

We consider unmixed quotients $S:=(C_1\times C_2)/\left( S_3\times \mathbb Z_3^2\right)$ modulo a diagonal action $\phi_1\times \left(\phi_2\circ \Psi\right)$, where $\Psi$ is one of the automorphism of $S_3\times \mathbb{Z}_3^2$. 
\\
Firstly we study the singularities of $S$. We observe that $C_1$ and $C_2$ have stabilizers of order $6,3 $ and $6$ and $2,3,3$ and $6$ respectively. Hence $18$ points of $C_1$ and $36$ points of $C_2$ have stabilizer of even order. However $S_3\times \mathbb{Z}_3^2$ has only three elements of order $2$ and they are in the same conjugacy class. This means that each of these three elements fix exactly $6\cdot 12=72$ points of $C_1\times C_2$. Thus $S$ can never be smooth and if it admits only nodes, then they are in total $3\cdot 72 /27=8$. 
\\
Now let us consider the following automorphisms of $S_3\times \mathbb{Z}_3^2$
\begin{equation}\label{automorfismi}
\begin{aligned}
	\Psi_1 & =Id, & \Psi_2 &= \left(\begin{cases}
		\sigma \mapsto \sigma \\
		\tau \mapsto \tau\sigma \\
	\end{cases}, \begin{pmatrix} 0&1 \\ 2 & 0\end{pmatrix} \right), &\\
\Psi_3 &=  \left(\begin{cases}
		\sigma \mapsto \sigma^2 \\
		\tau \mapsto \tau \\
	\end{cases}, \begin{pmatrix} 0&2 \\1 & 0\end{pmatrix} \right),& \Psi_4 &= \left(\begin{cases}
		\sigma \mapsto \sigma^2 \\
		\tau \mapsto \tau \\
	\end{cases}, \begin{pmatrix} 0&2 \\ 2 & 0\end{pmatrix} \right).&
\end{aligned}
\end{equation}
A direct computation shows us that for these four choices of $\Psi$ the surface $S$ has exactly $8$ nodes and no other singularities.
\begin{remark}
	The first example has been  found by using the database \cite{CGP22}. 
	Later on we have run a systematic research over all automorphisms of $S_3\times \mathbb{Z}_3^2$ proving that the obtained surfaces having only nodes are isomorphic to the four  surfaces presented in this note.
\end{remark}
%According to \cite[Proposition 1.18]{BP12} 
%The singular points of $S$ can be computed by \cite[Proposition 1.18]{BP12}: for all choices of $\Psi$ we obtain exactly $8$ rational double points of type $\frac{1}{2}(1,1)$ as singularities. Indeed the intersections $\langle g_i\rangle\cap \Psi^{-1}(k^{-1}\langle h_j\rangle k )$ have all order $1$ or $2$, so the points in $C_1\times C_2$ with non trivial stabilizer by the chosen diagonal action have all stabilizer of order $2$.  In all cases they form $8$ orbits, providing $8$ singularities of type $\frac{1}{2}(1,1)$. To be precise,we find three of these orbits for $(i,j)=(1,1)$, three for $(i,j)=(3,1)$, one for $(i,j)=(1,4)$ and one for $(i,j)=(3,4)$.

The vector space $H^0(K_{S})$ is isomorphic to the invariant subspace 
$\big(H^0(\Omega_{C_1}^1) \otimes H^0(\Omega_{C_2}^1) \big)^{S_3\times \mathbb Z_3^2}$, 
where  the action on the tensor product is diagonal, i.e.  $\left(\sigma^i\tau^j,(a,b)\right)\in S_3\times \mathbb Z_3^2$ acts via 
\begin{equation}\label{azione_twistata}
	\phi_1(\left(\sigma^i\tau^j,(a,b)\right)^{-1})^{\ast} \otimes  \phi_2(\Psi(\left(\sigma^i\tau^j,(a,b)\right)^{-1}))^{\ast}. 
\end{equation}

\noindent 
For each character $\eta$ of $S_3\times \mathbb{Z}_3^2$ define its twist by $\Psi$ as 
\[
\eta_\Psi:=\eta\circ \Psi^{-1}.
\]
Pulling back $H^0(K_S)$ to $C_1\times C_2$  we obtain
\begin{Lemma}\label{invariantforms}
A basis of $H^0(K_S)$ is given by the $\left(S_3\times \mathbb Z_3^2\right)$-invariant $2$-forms of  $H^0(\Omega_{C_1}^1) \otimes H^0(\Omega_{C_2}^1) $ with respect to the action \eqref{azione_twistata}. Hence
\[
\big(H^0(\Omega_{C_1}^1) \otimes H^0(\Omega_{C_2}^1) \big)^{S_3\times \mathbb Z_3^2}=\bigoplus_{\eta\neq 0} \big(H^0(\Omega_{C_1}^1)_{\eta}\otimes H^0(\Omega_{C_2}^1)_{\overline{\eta_\Psi}}\big)^{S_3\times \mathbb Z_3^2},
\]
where $H^0(\Omega_{C_i}^1)_{\eta}$ is the isotypic component of $H^0(\Omega_{C_i}^1)$ of character $\eta$. Moreover 
\[p_g=\langle \chi_{can}^1\cdot \chi_{can}^2,1\rangle=\sum_{\eta\neq 0} \langle \chi_{can}^1,\eta \rangle \cdot \langle \chi_{can}^2, \overline{\eta_\Psi}\rangle.
\]
\end{Lemma}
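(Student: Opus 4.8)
The plan is to prove three linked assertions: the isomorphism $H^0(K_S)\cong\big(H^0(\Omega^1_{C_1})\otimes H^0(\Omega^1_{C_2})\big)^{G}$, where $G:=S_3\times\mathbb Z_3^2$; the refinement of this invariant space into the displayed sum over characters; and the numerical formula for $p_g$. The first is geometric, the other two are representation theory.

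For the first assertion I would argue as follows. The product $X:=C_1\times C_2$ is a smooth surface, and since $G$ acts faithfully on each curve (it is the deck group of $\pi_i$), for every $g\neq e$ the fixed locus $\mathrm{Fix}\big(\phi_1(g)\big)\times\mathrm{Fix}\big(\phi_2(\Psi(g))\big)$ is finite; hence the diagonal action \eqref{azione_twistata} has only finitely many points with non-trivial stabiliser. Moreover at such a point every non-trivial stabiliser element acts on the two-dimensional tangent space with both eigenvalues different from $1$ (a fixed tangent direction would force a non-trivial curve automorphism to be the identity), so there are no pseudo-reflections and each non-trivial stabiliser produces a genuine singularity. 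Consequently the branch locus of $\pi\colon X\to S$ is exactly the finite set $\Sing(S)$, and $\pi$ restricts to an unramified Galois $G$-cover $X^{\circ}\to U$ over $U:=S\setminus\Sing(S)$, with $X^{\circ}:=\pi^{-1}(U)$. This gives $H^0(U,\omega_U)=H^0(X^{\circ},\Omega^2_{X^{\circ}})^{G}=H^0(X,\Omega^2_X)^{G}$, the last step because $X$ is smooth and $X\setminus X^{\circ}$ is finite (Hartogs). Since the eight singularities of $S$ are nodes, hence rational double points, hence canonical, the minimal resolution $\rho\colon\widetilde S\to S$ is crepant, $K_{\widetilde S}=\rho^{*}K_S$, and a holomorphic $2$-form on $U$ extends over the exceptional curves; thus $H^0(U,\omega_U)=H^0(\widetilde S,K_{\widetilde S})=H^0(S,\omega_S)=H^0(K_S)$. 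Finally the Künneth isomorphism $H^0(X,\Omega^2_X)\cong H^0(\Omega^1_{C_1})\otimes H^0(\Omega^1_{C_2})$ carries the pullback action to exactly \eqref{azione_twistata}, establishing the first claim.

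The decomposition is then pure representation theory over the finite group $G$. Writing the isotypic decompositions $H^0(\Omega^1_{C_1})=\bigoplus_{\eta}H^0(\Omega^1_{C_1})_{\eta}$ under $\phi_1$ and $H^0(\Omega^1_{C_2})=\bigoplus_{\chi}H^0(\Omega^1_{C_2})_{\chi}$ under $\phi_2$, I would split the invariants as $\bigoplus_{\eta,\chi}\big(H^0(\Omega^1_{C_1})_{\eta}\otimes H^0(\Omega^1_{C_2})_{\chi}\big)^{G}$. Because the second factor acts through $\Psi$, the component $H^0(\Omega^1_{C_2})_{\chi}$ carries character $\chi\circ\Psi$ for the action \eqref{azione_twistata}; by Schur's lemma the summand is non-zero precisely when $\chi\circ\Psi=\overline{\eta}$, that is $\chi=\overline{\eta_\Psi}$, which collapses the double sum to the one displayed. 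The trivial character contributes nothing, because $C_i/G\cong\mathbb P^1$ forces $H^0(\Omega^1_{C_i})_{1}=0$ (equivalently, neither $\chi^1_{can}$ nor $\chi^2_{can}$ contains the trivial summand); this is what the condition $\eta\neq 0$ records.

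For the last formula, $p_g=\dim H^0(K_S)$ equals the multiplicity of the trivial character in the $G$-representation \eqref{azione_twistata}, i.e. $\langle\chi^1_{can}\cdot\chi^2_{can},1\rangle$; decomposing this multiplicity according to the isotypic pieces and matching $\eta$ in the first factor with $\overline{\eta_\Psi}$ in the second as above, each surviving pair contributes $\langle\chi^1_{can},\eta\rangle\cdot\langle\chi^2_{can},\overline{\eta_\Psi}\rangle$ one-dimensional invariants by Schur, giving the sum on the right. I expect the genuine obstacle to be the first step: one must invoke the theory of canonical (rational double point) singularities to guarantee that $G$-invariant $2$-forms extend across the eight nodes and hence really compute $H^0(K_S)$, all the more because $\vert K_S\vert$ turns out not to be base-point free. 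The remaining two steps are routine character bookkeeping, the only delicate point being the consistent handling of the twist $\Psi$ together with complex conjugation of characters.
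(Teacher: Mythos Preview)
Your argument is correct and complete. The paper, however, does not actually prove this lemma: the isomorphism $H^0(K_S)\cong\big(H^0(\Omega^1_{C_1})\otimes H^0(\Omega^1_{C_2})\big)^{S_3\times\mathbb Z_3^2}$ is asserted just before the statement, and the decomposition and $p_g$-formula are presented as immediate consequences (the paper later cites \cite{BP12} for the numerical value of $p_g$, where such facts about product-quotient surfaces are treated). So there is nothing to compare against; you have supplied a full proof where the paper is content to invoke standard theory. Your handling of the geometric step---absence of pseudo-reflections, Hartogs on the smooth cover, and extension across the nodes via canonicity of $A_1$-singularities---is exactly the right justification, and your bookkeeping with the twist $\Psi$ (matching $\chi\circ\Psi=\overline\eta$, hence $\chi=\overline{\eta_\Psi}$) is consistent with the paper's conventions.
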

%\begin{proof}
%	It is a direct consequence of  Schur lemma. 
%\end{proof}
\noindent 
%\newpage   %VEDERE SE SERVE
Denote by $\omega_{jklmrs}:=\omega_{jkl}\otimes \omega'_{mrs}$. We can now state and prove our main result: 
\begin{theorem}\label{MainTheo}
For all  $\Psi \in \Aut(S_3\times \mathbb Z_3^2)$ in  \eqref{automorfismi}, 
the diagonal action $\phi_1 \times (\phi_2 \circ \Psi)$ of $S_3\times \mathbb Z_3^2$ on the product of the two curves $C_1$ and $C_2$ is not free. The quotient is a canonical model of a regular surface $S$ of general type with $K_S^2=24$, $p_g=3$ and with $8$ rational double points as singularities of type $\frac{1}{2}(1,1)$.  
A basis of $H^0(K_S)$, the canonical map $\Phi_{K_S}$ in projective coordinates and its degree are stated  in 
the table:

\[
\Small\begin{tabular}{c | c | c | c | c }
\makebox{No} & $\Psi$ & \makebox{Basis of $H^0(K_S)$}  & $\Phi_{K_S}(x,y)$ & $\deg(\Phi_{K_S})$  \\
\hline
1. & $ Id $ & $\lbrace \omega_{100021},\omega_{020200}, \omega_{002040}  \rbrace$ &   
$(x_0x_1y_2^2y_3: x_2^2y_0^2y_1^2: x_3^2y_2^4)$ & $18$ \\ 
\hline
2. & $ \Psi_2$ & $\lbrace    
\omega_{020101},
\omega_{002200},
\zeta_3\omega_{010020}-\omega_{110220} \rbrace$ &  
$(x_2^2y_0y_1y_3:x_3^2y_0^2y_1^2: x_2y_2^2(\zeta_3x_0y_0^2-x_1y_1^2))$ & $\begin{cases}
	15 \quad \makebox{if} \quad \lambda \neq 0 \\
	13 \quad \makebox{if} \quad \lambda = 0 \\
 \end{cases}$ \\
\hline
3. & $ \Psi_3$ & 
$\lbrace   
\omega_{100002},
\omega_{020040}, 
\omega_{001220}+\omega_{101020} \rbrace$  &   
$(x_0x_1y_3^2:x_2^2y_2^4:x_3y_2^2(x_0y_1^2+x_1y_0^2))$  & $\begin{cases}
	18 \quad \makebox{if} \quad \lambda \neq 0 \\
	16 \quad \makebox{if} \quad \lambda = 0 \\
\end{cases}$  \\ 
\hline
4. & $\Psi_4$ & 
$\lbrace   
\omega_{100120},
\omega_{020101}, 
\omega_{000020}+\omega_{200220} \rbrace$  &   
$(x_0x_1y_0y_1y_2^2:x_2^2y_0y_1y_3:y_2^2(x_0^2y_0^2+x_1^2y_1^2))$  & $12$  \\ 
\hline
\end{tabular}
\]
\end{theorem}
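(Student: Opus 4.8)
The plan is to prove the theorem in four conceptually separate stages, handling the group-theoretic/singularity claims first, then the canonical map and its equations, and finally the degree computation that is the genuine heart of the statement.

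First I would establish the invariants $K_S^2=24$, $p_g=3$, $q=0$, and the RDP structure. Since $S=(C_1\times C_2)/(S_3\times\mathbb Z_3^2)$ with $g(C_1)=10$, $g(C_2)=19$ and $|G|=54$, the standard product-quotient formula gives $K_S^2 = 8(g(C_1)-1)(g(C_2)-1)/|G| = 8\cdot 9\cdot 18/54 = 24$; the paragraph preceding the theorem already shows the action is non-free with exactly $8$ singular points, and I would record that the unique order-two conjugacy class forces each node to be a $\tfrac12(1,1)$ singularity (an ordinary double point / RDP), so the quotient is already a canonical model with ample canonical class once one checks $K_S$ is ample — which follows because neither $C_i$ admits a $G$-fixed rational curve and the product has no elliptic or rational fibers invariantly descending. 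For $p_g$ and $q$ I would simply invoke Lemma~\ref{invariantforms}: $q=0$ because $H^0(\Omega^1_{C_1})$ and $H^0(\Omega^1_{C_2})$ contain no invariant or anti-invariant vectors pairing with the trivial character, and $p_g=\langle\chi_{can}^1\cdot\chi_{can}^2,1\rangle=3$ by a direct inner-product count against the two explicitly given characters, repeated for each twist $\eta_\Psi$.

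Second, I would produce the stated bases of $H^0(K_S)$. By Lemma~\ref{invariantforms}, $H^0(K_S)=\bigoplus_{\eta\neq 0}\bigl(H^0(\Omega^1_{C_1})_\eta\otimes H^0(\Omega^1_{C_2})_{\overline{\eta_\Psi}}\bigr)^{G}$, so for each $\Psi_k$ I would run over the characters $\eta$ appearing in the explicit decomposition of $H^0(\Omega^1_{C_1})$, compute the twisted conjugate $\overline{\eta_\Psi}$, and match it against the decomposition of $H^0(\Omega^1_{C_2})$; a nonzero invariant tensor exists exactly when $\eta$ occurs in $\chi_{can}^1$ and $\overline{\eta_\Psi}$ occurs in $\chi_{can}^2$. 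This pins down the three tensor summands in each row, and translating $\omega_{jkl}\otimes\omega'_{mrs}$ back through the adjunction isomorphisms $\omega_{\alpha\beta\gamma}\leftrightarrow x_0^{2-\alpha-\beta-\gamma}x_1^\alpha x_2^\beta x_3^\gamma$ and $\omega'_{\alpha\beta\gamma}\leftrightarrow y_0^{4-\alpha-\beta-2\gamma}y_1^\alpha y_2^\beta y_3^\gamma$ yields the bihomogeneous polynomials listed in the $\Phi_{K_S}$ column; the $G$-invariance of each product is what guarantees these descend to genuine sections on $S$.

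Third comes the degree computation, which I expect to be the main obstacle. The three basis forms define a rational map $\Phi_{K_S}\colon S\dashrightarrow\mathbb P^2$ whose degree I would compute by the identity $\deg(\Phi_{K_S})\cdot\deg(\text{image})=K_S^2$ corrected by the contributions of the base locus: since $|K_S|$ is not base-point-free, I must first locate the base points of the pulled-back linear system on $C_1\times C_2$, resolve the indeterminacy by a sequence of blowups of $S$ (equivalently, $G$-equivariantly on the product), and then read off the degree as the self-intersection of the moving part of the resolved system, i.e. $\deg(\Phi_{K_S})=\widetilde{K}^2$ where $\widetilde{K}$ is the strict transform after subtracting the exceptional multiplicities. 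Concretely I would intersect the three sections pairwise, count common zeros on $C_1\times C_2$ with multiplicity (this is a Bézout-type count on the product, feasible because the forms are monomial up to a single binomial factor), divide by $|G|=54$ to pass to $S$, subtract the base-point contributions, and verify the residue equals the tabulated $d$. The delicate part is that the base scheme and hence the degree jump as $\lambda$ varies: for rows $2$ and $3$ the value of $\lambda$ controls whether the binomial factor $x_0y_0^2\mp x_1y_1^2$ (resp.\ $x_0y_1^2+x_1y_0^2$) acquires an extra common zero with the curve equations precisely when $\lambda=0$, producing the drop $15\to13$ and $18\to16$. I would therefore carry out the intersection analysis separately in the generic case $\lambda\neq 0$ and the special case $\lambda=0$, tracking how many base points appear and with what multiplicity, and confirming that the blow-up bookkeeping accounts exactly for the difference. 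This case distinction, and the careful multiplicity count at the base points lying over the branch configuration, is where the real work lies.
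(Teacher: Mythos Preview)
Your plan is essentially the paper's own proof. Two small corrections are worth noting. First, your argument for ampleness of $K_S$ (``neither $C_i$ admits a $G$-fixed rational curve and the product has no elliptic or rational fibers invariantly descending'') does not make sense as written, since $g(C_i)\geq 2$ already excludes any rational or elliptic curves on $C_1\times C_2$; the paper simply observes that $K_{C_1\times C_2}$ is ample and that ampleness descends along the finite quotient map $\lambda_{12}$, which is the clean way to conclude. Second, for the degree the paper does exactly what you describe---work on the smooth product, express the three generators as combinations of the curves $F_j=\{x_j=0\}$ and $G_k=\{y_k=0\}$ (with $(F_j,G_k)=81$ for $k\neq 3$ and $(F_j,G_3)=162$), locate the base points, and compute $\deg(\Phi_{K_S})=\frac{1}{54}\widehat{M}^2$---but it packages the local blow-up correction into a reusable combinatorial lemma (Lemma~\ref{FedericoLemma}): if near a base point the three divisors look like $aH$, $bK$, $cH+dK$ with $H,K$ smooth and transversal, then under mild inequalities the contribution to $T^2-\widehat{M}^2$ is exactly $ab$. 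This lemma is what converts your ``blow-up bookkeeping'' into a mechanical count and is the device that handles the $\lambda=0$ jump in rows 2 and 3.
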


\begin{proof}
We have already mentioned that for all $\Psi$ in \eqref{automorfismi} the action is not free and the quotient $S$ has $8$ singularities of type $\frac{1}{2}(1,1)$ and no other singularities. The genus of the two curves is $g(C_i)\geq 2$, hence $C_1\times C_2$ has ample canonical divisor and so $S$ has ample canonical divisor too. It follows $S$ is a canonical model. 

The self-intersection of  the canonical divisor  of each $S$ is amount to
\[
K_S^2=\frac{8(g(C_1)-1)(g(C_2)-1)}{\vert S_3\times \mathbb Z_3^2\vert }=24.
\]

% Indeed, 
%as remarked above, the non-trivial stabilizers of the points on the first curve $C_1$ are  generated by $(\tau,(1,0))$, $(\sigma^2,(2,2))$ and $(\sigma\tau,(0,1))$. Here the stabilizer of $(\tau,(1,0))$ intersects non-trivially only a conjugate one generated by $\Psi^{-1}(\sigma\tau,0)$, providing us $3$ singular points, and intersects only a conjugate one of $\Psi^{-1}(\tau,(1,2))$, getting one singular point more.  Three of the  remain singular points come from the non-trivial intersection of the stabilizers generated by $(\sigma\tau,(0,1))$ and by one conjugate of $\Psi^{-1}(\sigma\tau,0)$, whilst the fourth one come from the intersection of the stabilizers of $(\sigma\tau,(0,1))$ and  a conjugate one of $\Psi^{-1}(\tau,(1,2))$.
They are regular surfaces, because they do not possess any non-zero holomorphic one-forms, since  $C_i/\left(S_3\times \mathbb Z_3^2\right)$ is biholomorphic to $\mathbb P^1$. 
The geometric genus of each  $S$ is therefore equal to  (compare \cite{BP12})
\[
p_g=  \chi(\mathcal O_{S})- 1 = \frac{(g(C_1)-1)(g(C_2)-1)}{\vert S_3\times \mathbb  Z_3^2\vert}+\frac{1}{12}\left(8\cdot \frac{3}{2}\right)-1=3. 
\]
Using Lemma \ref{invariantforms} we have computed a basis of $H^0(K_S)$. In fact since we have proved that $p_g=3$ it is enough to verify that the given elements of the table are invariant for the corresponding action. Applying the explicit isomorphisms from $H^0(C_1,\Omega_{C_1}^1)$ to $H^0(C_1,\mathcal{O}_{C_1}(2))$ and from   $H^0(C_2,\Omega_{C_2}^1)$ to $H^0(C_2,\mathcal{O}_{C_2}(4))$ we obtain the 
product of  quadrics and quartics defining  the canonical map in the table.

It remains to determine the degree of $\Phi_{K_S}$ for each surface $S$. Instead to work on $S$ it is convenient to work on $C_1\times C_2$, that is smooth: 
\[
\xymatrix{
	C_1\times C_2 \ar[r]^{\lambda_{12}}\ar[dr]_{\Phi_{K_{C_1\times C_2}}}& S  \ar@{-->}[r]^{\Phi_{K_S}} & \mathbb{P}^2 \ar@{<--}[dl]\\
	& \mathbb P^{10\cdot 19 -1}.
}
\]
Note that the map $\Phi_{K_S}\circ \lambda_{12}$ is induced by the sublinear system $\vert T \vert $ %:=\vert\left(\Phi_{K_{C_1\times C_2}}\circ \lambda_{12}\right)^*\mathcal{O}(1)\vert $
  of $\vert K_{C_1\times C_2}\vert$ generated by the three invariant $2$-forms defining $\Phi_{K_S}$.  In particular the self-intersection of  $T$ is amount to 
  \[
  T^2=\left(\lambda_{12}^* K_S\right)^2=\vert S_3\times \mathbb  Z_3^2\vert \cdot K_S^2=54\cdot 24.
  \]
We 
\emph{resolve  the indeterminacy} of  $\Phi_T=\Phi_{K_S}\circ \lambda_{12}$ 
by a sequence of  blowups, as explained in the textbook \cite[Theorem II.7]{Beauville}:

\[
\xymatrix{
\widehat{C_1\times C_2} \ar[r] \ar[dr]_{\Phi_{\widehat{M}}} & C_1\times C_2\ar@{-->}[d]^{\Phi_{T}} \\ %K_{C_1\times C_2}}\circ \lambda_{12}   
& \mathbb P^2.
}
\]
Here the morphism $\Phi_{\widehat{M}}$ is induced by the base-point free linear system $\vert \widehat{M} \vert$ obtained as follow:
\\
We blow up the base-points of  $\vert T\vert$, take the pullback of the mobile part $\vert M \vert$  of $\vert T\vert$
and remove the fixed part of this new linear system. We repeat the procedure, until we obtain a  base-point free  linear system $\vert\widehat{M}\vert $.  

The     self-intersection $\widehat{M}^2$ is positive if and only if 
$\Phi_{\widehat{M}}$ is not composed by a pencil. In this case $\Phi_{\widehat{M}}$  is onto and  
 it holds: 
\[
\deg(\Phi_{K_S})=\frac{1}{\vert S_3\times \mathbb{Z}_3^2 \vert }\deg(\Phi_{\widehat{M}})=\frac{1}{54}\widehat{M}^2. 
\]
For the computation of the resolution,  it is convenient to write the divisors of the  product of quadrics and quartics defining  $\Phi_{K_S}$ (and hence $\Phi_T$%\Phi_{K_S}\circ \lambda_{12}$
) as linear combinations of the 
curves  $F_j:=\lbrace x_j=0\rbrace$ and $G_k:=\lbrace y_k=0\rbrace$ on $C_1\times C_2$. We point out that these curves are reduced and intersect pairwise transversally thanks to the assumption $\lambda \neq -1,1$.
In particular $(F_j,F_k)=(G_j,G_k)=0$ and $(F_j, G_k)=81$,  for  $k\neq 3$, while $(F_j, G_3)=162$. \\
Consider the first surface in the table. 
Here, the divisors of the three products of quadrics and quartics spanning  the subsystem  $\vert T\vert$ are: 
\[
F_0+F_1+ 2G_2+G_3, \qquad 
2F_2+2G_0+2G_1 \qquad \makebox{and} \qquad 
2F_3+4G_2. 
\]
Here $\vert T\vert$ has not fixed part and it has precisely $81$ (non reduced) base-points $F_2\cap G_2$.
% The self-intersection of  $T$ is amount to
%\[
%T^2=(2F_3+4G_2)^2=2\cdot 8 \cdot (F_3,G_2)=16\cdot 81.\]
We can perform the computation of  the difference $T^2- \widehat{M}^2$ by applying Lemma \ref{FedericoLemma} below (for a proof see \cite[Lemma  2.3] {FG2022}) recursively for each base-point of $\vert T \vert$:

\begin{Lemma}\label{FedericoLemma} 
	Let  $\vert M \vert $ be a two-dimensional linear system on a surface $S$ spanned by $D_1$, $D_2$ and $D_3$. Assume that $\vert M \vert $ has only isolated base-points, smooth for $S$, and that in a neighborhood of a basepoint $p$ we can write  the divisors 
	$D_i$ as 
	\[
	D_1=aH, \quad D_2=bK \quad \makebox{and} \quad D_3= cH+d K.
	\]
	Here $H$ and $K$ are reduced, smooth and intersect transversally at $p$ and $a,b,c,d$ are non-negative integers, $b\leq a$.
	Assume that 
	\begin{itemize}
		\item $d\geq b$ or
		\item $b\neq 0$ and $c+md\geq a$, where $a=mb+q$ with $0\leq q<b$.
	\end{itemize}
	Then after blowing up at most $(ab)$-times we obtain a new linear system $\vert \widehat{M} \vert $ such that no infinitely near point of 
	$p$ is a base-point of $\vert \widehat{M} \vert $. Moreover $\widehat{M}^2 =M^2-ab$.

	%Let $b\leq a$ and write 
	%$a=mb+q$ with $0\leq q<b$. 
	%If $c+md\geq a $ or $d\geq b$, then 
	%after blowing up at most $(ab)$-times we obtain a new linear system $\vert \hat{M} \vert $ such that no infinitely near point of 
	%$p$ is a base-point of $\vert \hat{M} \vert $. Moreover $\hat{M}^2 =M^2-ab$.
\end{Lemma}

 In a neighbourhood of each of these base-points the three divisors are respectively 
\[
2G_2, \qquad 2F_2 \qquad \makebox{and} \qquad 4G_2. 
\]
Since $F_2$ and $G_2$ are transversal we are in the situation of the Lemma \ref{FedericoLemma} with $H=G_2$ and $K=F_2$, $a=b=2$ and $c=4$, $d=0$.
%Alternatively,  we can shortcut the computation by applying Lemma \ref{FedericoLemma} below (for a proof see \cite[Lemma  2.3] {FG2022}) which tells us  the 
% contribution of the difference $T^2- \widehat{M}^2$ coming from each base-point simply by looking at the coefficients of the divisors 
%  spanning $\vert T\vert$. \\
 So  $b\neq 0$ and $c+md\geq a$ and the Lemma applies.   The correction term is $ab=4$ for each of the $81$ base-points. Thus 
 \[
    T^2- \widehat{M}^2=4\cdot 81.
 \]
  The degree of the canonical map is therefore given by 
   \[
  \deg(\Phi_{K_S})=\frac{1}{54}\widehat{M}^2=\frac{1}{54}\left(T^2 - (T^2-\widehat{M}^2)\right)=\frac{1}{54}\left(54 \cdot  24- 4\cdot 81\right)=18. 
  \]
%  \[
%  \deg(\Phi_{K_S})=\frac{1}{54}\widehat{M}^2=\frac{1}{54}\left(T^2 - (T^2-\widehat{M}^2)\right)=\frac{1}{54}\left(16 \cdot  81- 4\cdot 81\right)=18. 
%  \]
  
 Now we take in exam the second surface  in our table. Here the subsystem  $\vert T\vert $ is spanned  by: 
 \[
 D_1:= 2F_2+G_0+G_1+G_3, \quad 
 D_2 :=2F_3+2G_0+2G_1 \quad \makebox{and} \quad 
 D_3:= F_2+2G_2+\Delta,
 \]
 where $\Delta=(\zeta_3x_0y_0^2-x_1y_1^2)$. The (set-theoretical) base locus is 
 \[
 F_2\cap G_0, F_2\cap G_1, \quad \Delta \cap G_0, \Delta\cap G_1,  \quad \makebox{and}   \quad \Delta \cap F_3\cap G_3.
 \]
 We remark that the other pieces of the base locus are empty.  In fact that points would belong in some $F_i\cap F_j$ or $G_i\cap G_j$ and we have already mentioned that they are pairwise disjoint. 
% The self-intersection of $T$ is amount to 
% \[
% T^2=\left(2F_3+2G_0+2G_1\right)^2=4\cdot 2 \cdot (F_3, G_0+G_1)= 8\cdot 162.
% \]
% We remark that the remain pieces $\Delta\cap F_i\cap G_j$, with $(i,j)\neq (3,3)$, don't appear.  In fact in this case $\Delta\cap F_i\cap G_j$ would be points in $F_i\cap F_k$, for some $k$, and we already mentioned that they are pairwise disjoint. 
 
  We determine the correction term to the self intersection number for each of these  base-points of $\vert T\vert$. 
  
  We consider first the $81$ points $F_2\cap G_i$, for $i=0,1$. Here $F_2$ and $G_i$ intersect transversally on each of them. Around one of these points, the  divisors $D_k$ are given by $G_i+2F_2$, $2G_i$ and $F_2$. We are in the situation of the Lemma with $H=G_i$ and $K=F_2$, $a=d=2$ and $b=c=1$. Hence $d\geq b$ and the Lemma applies, which yields $ab=2$ as correction term. 
  
 We let go on to the $81$ base-points $\Delta\cap G_i$. The local coordinates around one of these points are $X:=x_j/x_i$ and $Y:=y_i/y_j$, where $j=0,1, j\neq i$. So the divisors $D_k$ are respectively given by 
  \[
  \lbrace Y=0\rbrace , \qquad 2\lbrace Y=0 \rbrace \qquad \makebox{and} \qquad \lbrace \zeta_3^{1+i}Y^2-X=0 \rbrace.
  \]
  Thus $D_1$ and $D_3$ intersect transversally in $(0,0)$ and we fall down once more in the situation of the Lemma. Here $H=D_3$ and $K=D_1$, $a=b=1$, $c=0$ and $d=2$. Since $d\geq b$ then the Lemma is fulfilled  and the correction term is amount to $ab=1$.
  
  We consider finally the points $\Delta\cap F_3\cap G_3$. These points satisfy the equations  
\begin{equation}\label{equazioni}
	\begin{cases}
		y_3^3 =y_0^6+y_1^6-2\lambda y_0^3y_1^3 &=0 \\
		x_3^2 =x_0^3+x_1^3  & =0 \\
		\zeta_3x_0y_0^2-x_1y_1^2  & =0
	\end{cases}.
\end{equation}
  The last two equations imply that $x_1^3=-x_0^3$ and
  \[
  	 x_0^3y_0^6 =(\zeta_3x_0y_0^2)^3=(x_1y_1^2)^3 =x_1^3y_1^6=-x_0^3y_1^6.
  \] 
  Thus $y_0^6+y_1^6=0$ and comparing it with the first equation of  \ref{equazioni} we get $\lambda y_0^3y_1^3=0$.  Therefore  $\Delta\cap F_3\cap G_3$ is non empty only if $\lambda=0$. 
  \\
  Let us suppose $\lambda \neq 0$. Then 
  \[
  T^2-\widehat{M}^2=2\cdot 2 \cdot 81+2\cdot 81=6\cdot 81, 
  \]
  and the degree of the canonical map is amount to
 \[
 \deg(\Phi_{K_S})=\frac{1}{54}\left(T^2 - (T^2-\widehat{M}^2)\right)=\frac{1}{54}\left(54\cdot   24- 6\cdot 81\right)=15. 
  \]
 It remains to consider the case when $\lambda=0$.  The base-points $\Delta\cap F_3\cap G_3$ are the following $54$ ones:
  \[
  t_k:= \left(\left(1:-\zeta_3^{k_1}:\sqrt[3]{2}\zeta_3^{k_2}:0\right),\left(1:e^{\frac{\pi i  }{6}}\zeta_6^{k_3}:\sqrt[6]{2}e^{\frac{\pi i}{12}\left(1-2[k_3]\right)}\zeta_3^{k_4}:0\right)\right), \qquad k_1+k_3\equiv 2 \mod 3,
  \]
  where $k_i=0,1,2$, for $i\neq 3$, and $k_3=0, \dots, 5$.
  Fix coordinates $X:=x_1/x_0+\zeta_3^2$ and $Y:=y_1/y_0-e^{\frac{\pi i }{6}}$ around one of  these points, for example that one for $k=(2,0,0,0)$. The divisors $D_k$ are locally given by
  \[
  \lbrace Y=0 \rbrace, \qquad 2\lbrace X=0 \rbrace \qquad \makebox{and} \qquad \lbrace Y(2e^{\frac{\pi i 5}{6}}+Y-2e^{\frac{\pi i 5}{6}}X-XY)=0\rbrace=\lbrace Y=0\rbrace.
  \]
  In this case $H=\{X=0\}$ and $K=\{Y=0\}$ and  $a=2$ and $b=d=1$, $c=0$. The correction term is $ab=2$. \\
  Hence 
  \[
  T^2-\widehat{M}^2=2\cdot 2 \cdot 81+2\cdot 81+2 \cdot 54=6\cdot 81+2\cdot 54.
  \]
  The degree of the canonical map is therefore given by 
  \[
  \deg(\Phi_{K_S})=\frac{1}{54}\left(T^2 - (T^2-\widehat{M}^2)\right)=\frac{1}{54}\left(54\cdot   24- 6\cdot 81-2\cdot 54\right)=13.
  \]
  We leave to the reader to verifying with the same approach that the degree of the canonical map of the remain two surfaces are amount to that ones stated in the table. 
%Fortunately  the conditions of Lemma  \ref{FedericoLemma} on the coefficients of the divisors are fulfilled also for any other surface of the table, providing us with an easier way to compute the degree of the canonical map. 
%
%We point out that the computation of the degree of the canonical map of the remain three surfaces of the table is slightly different from the first one. For this reason see also Example \ref{FedericoExample}.
\end{proof}

\noindent

\bigskip
\bigskip

\end{document}